\def\volno{0}\fi
\def\volyear{2017}\fi
\def\pagno{000--000}\fi
\newfont{\footsc}{cmcsc10 at 8truept}
\newfont{\footbf}{cmbx10 at 8truept}
\newfont{\footrm}{cmr10 at 10truept}
\renewcommand\paragraph{\@startsection{paragraph}{4}{\z@}
                                    {2ex \@plus.5ex \@minus.2ex}
                                    {-1em}
                                    {\normalfont\normalsize\bfseries}}
\renewcommand\subparagraph{\@startsection{subparagraph}{5}{\parindent}
                                       {2ex \@plus.5ex \@minus .2ex}
                                       {-1em}
                                      {\normalfont\normalsize\bfseries}}
\newlength{\BiblioSpacing}
\def\abstractname{Abstract -}   
\def\abstract{\begin{adjustwidth}{1cm}{1cm} \par    \footnotesize \noindent {\bf \abstractname} 
\def\endabstract{ \end{adjustwidth} \smallskip }}
\newtheorem{theorem}{Theorem}[section]}
\newtheorem{proposition}[theorem]{Proposition}}
\newtheorem{definition}[theorem]{Definition}}
\newtheorem{corollary}[theorem]{Corollary}}
\newtheorem{example}[theorem]{Example}}
\def\dedicatory{\date}
\DeclarePairedDelimiter{\ceil}{\lceil}{\rceil}
\newcommand{\Zz}{\mathbb{Z}}
\newcommand{\Rr}{\mathbb{R}}
\newcommand{\Nn}{\mathbb{N}}
\renewcommand{\vec}{\mathbf}
\newcommand{\vx}{\vec{x}}
\newcommand{\vd}{\vec{d}}
\newcommand{\vp}{\vec{p}}
\newcommand{\vq}{\vec{q}}
\newcommand{\vr}{\vec{r}}
\newcommand{\cz}{ {\color{red} 0} } 
\newcommand{\cy}{ {\color{blue} 1} } 
\newcommand{\pEN}{\mathrm{pEN}}
\newcommand{\sEN}{\mathrm{sEN}}
\newcommand{\phd}{\underline{\hspace{2ex}}}
\renewcommand{\le}{\leqslant}
\renewcommand{\ge}{\geqslant}
\title{\Large\bf Refinement of Metrics: Erd\H{o}s Number, a Case Study}
  \author{\sc K. Lock, W.Y. Pong, and A. Wittmond\footnote{This work was
  supported by a PUMP research grant (NSF grant No. DMS-1247679). The first
  and the third author were undergraduate students of CSU Dominguez-Hills.}}
\dedicatory{\normalsize\em}
\begin{document}
\setcounter{page}{1}
\maketitle
\thispagestyle{fancy}

\vskip 1.5em

\begin{abstract}
  We introduce a concept called refinement and develop two different ways of
  refining metrics. By applying these methods we produce several refinements
  of the shortest-path distance on the collaboration graph and hence a couple
  new versions of the Erd\H{o}s number.
\end{abstract}
 
\begin{keywords}
  refinement; metrics; Erd\H{o}s number; monoid norms; monotonic monoid norms
\end{keywords}

\begin{MSC}
05C12; 05C90
\end{MSC}

\section{Introduction} \label{sec:mot}

Our investigation was motivated by a simple goal: to find a ``better''
version of the Erd\H{o}s number.

The {\bf Erd\H{o}s number} of a person can be defined recursively as follows:
Paul Erd\H{o}s has Erd\H{o}s number 0. A person other than Erd\H{o}s
himself has Erd\H{o}s number one more than the smallest Erd\H{o}s number
among his/her coauthors. If none of the person's coauthors have an Erd\H{o}s
number, then neither does that person. Equivalently, the Erd\H{o}s number of
a person is the shortest-path distance between that person and Paul Erd\H{o}s
in the graph where there is an edge between two people if they are among
the authors of a paper in mathematics. We refer to this graph as the {\bf
collaboration graph}. The American Mathematical Society provides an online
tool\footnote{\url{https://mathscinet.ams.org/mathscinet/freeTools.html?version=2}}
for computing the shortest-path distance between any two mathematicians
in the collaboration graph. Oakland University hosts the Erd\H{o}s number
project\footnote{\url{https://oakland.edu/enp/}} which provides many
interesting facts and data about the Erd\H{o}s number.

The shortest-path distance is a measure of closeness between nodes in
a graph. One may argue, however, that it is an inadequate measure of
closeness between collaborators. For instance, it is reasonable to say
that the more joint articles between the two people, the closer they are
as collaborators. However, such a natural idea is completely ignored by the
shortest-path distance. The ratio of the number of joint articles to the total
number of publications between two authors is another piece of information
that can be used in measuring distances of collaborators. These considerations
suggest one to view the collaboration graph as a weighted graph rather than
just a simple graph. In a finite weighted graph, the lightest-path distance
between two nodes in the same connected component is the minimum path weight
between the two nodes. The resistance distance is another metric on weighted
graphs in which every path between two nodes contributes some decrement
of the distance between them. The idea is that the more paths there are
connecting the two nodes, the less ``resistance'' there is to travel from
one to the other and hence the closer they are. Finding effective resistance
between nodes in an electric circuit is certainly familiar to engineers,
while viewing it as a metric on graphs is no strange business to graph
theorists either. We refer the reader to the book~\cite{mgt} by Bollob\'{a}s
and the article~\cite{ael} by Shapiro for more information. More recently,
Chebotarev constructed a family of graph-geodetic distances~\cite{graph-geo}
in which the lightest-path distance and the resistance distance correspond
to the two extreme cases of the parameter. Using the resistance distance to
measure closeness of collaborators was considered in~\cite{ren},~\cite{ren2}
and~\cite{ren3}. Some non-metrical generalizations of the Erd\H{o}s number that
measure proximity between nodes in weighted networks was proposed and studied
in~\cite{gen}. Since it is of secondary interest to us in this article, we
mention in passing that algorithms for computing the lightest-path distance
are well-known~\cite{algor} and those for computing resistance distance
have also been widely studied. The article~\cite{recursive} contain more
references on this topics.

There is one aspect of using either of the lightest-path distance or the
resistance distance to define Erd\H{o}s numbers that is unsatisfactory to us,
namely, the relative closeness between authors given by these metrics may
contradict the one given by the shortest-path distance: $A$ may be closer to
$B$ than to $C$ according the shortest-path distance but the exact opposite
may be true for either of the two distances aforementioned. Because of this,
we set our goal to finding metrics on the collaboration graph that, in some
fashion, take into account the number of joint articles but not contradicting
the shortest-path distance.

The rest of this article is organized as follows: in
Section~\ref{sec:metrics-and-norms} we introduce a notion called
monoid norm and use it to unify various constructions of metrics. In
Section~\ref{sec:refinement} we introduce the refinement relation on
functions defined on a Cartesian product with codomain a totally ordered
set.  We then show how to produce refinements of a metric by another
metric. We also identify a condition under which the refining process can be
iterated. Section~\ref{sec:metric+proximity} is devoted to a particular kind
of refinement of the shortest-path distance. Unlike the constructions given
in Section~\ref{sec:refinement}, the additional functions use in the refining
process are no longer metrics. But it is crucial that the metric being refined
is the shortest-path distance. Lastly, in Section~\ref{sec:computation},
we compute the new Erd\H{o}s numbers of a few mathematicians corresponding
to different refinements of the shortest path distance. We end the article
by proposing another the edge weight function which seems to be appropriate
for the purpose of refining the Erd\H{o}s number.

\section{Monoid norms} \label{sec:metrics-and-norms}

To produce metrics that fit our requirements set forth in the introduction,
we use several basic constructions of metrics. What seems to be new to us
here is the realization that all these constructions can be unified into a
single one. This led us to the following pair of notions. Let $(M, +, 0_M)$
be a monoid (written additively). We call a function $\mu$ from $M$ to $\Rr$
a {\bf monoid norm} if
\begin{enumerate}
  \item $\mu(x) \ge 0$ for any $x \in M$; 
  \item $\mu(x) = 0$ if and only if
  $x = 0_M$; and 
  \item $\mu\left( x+y \right) \le \mu(x) + \mu(y)$ for any
  $x,y \in M$.  (subadditivity)
\end{enumerate} 
A partially ordered monoid is a monoid $(M,+,0_M)$ equipped with a partial
order $\le_M$ on $M$ that respects translation, i.e. $x \le_M y$ implies
$x+z \le_M y+z$ for any $x,y,z \in M$. As an example, let $\Rr^n_{\ge 0}$
($n \ge 1$) be the set of $n$-tuples of non-negative real numbers. For $\vx
=(x_1,\ldots, x_n)$ and $\vx' = (x_1', \ldots, x_n')$ in $\Rr^n_{\ge 0}$,
let $\vx \le \vx'$ if $x_i \le x_i'$ in the usual order of real numbers for
each $1 \le i \le n$. The relation $\le$ thus defined is the product order on
$\Rr^n_{\ge 0}$. It is straightforward to check that $\Rr^n_{\ge 0}$ equipped
with component-wise addition and the product order is a partially ordered
monoid. We call a monoid norm $\mu$ on a partially ordered monoid $M$ {\bf
monotonic} if $x \le_M y$ implies $\mu(x) \le \mu(y)$. The reader will likely
recognize that the names of these notions are taken from their counterparts
for real vector spaces. The only difference is that the homogeneity property
of norms, that is $\mu(\alpha x) = |\alpha| \mu(x)$ for $\alpha \in \Rr$,
which does not make sense for monoids in general, is being dropped.
\begin{example}
  \label{ex:mmn}
The following functions are monotonic monoid norms (the first three functions
are defined for $\Rr_{\ge 0}$ and the last one is defined for $\Rr_{\ge 0}^n$
($n \ge 1$)):
\begin{enumerate}[(i)]
  \item $\mu(x) = \alpha x$ ($\alpha > 0$).

  \item $\mu(x) = \ceil{x}$ where $\ceil{x}$ denotes the least integer no
  smaller than $x$.

  \item $\mu(x) = \dfrac{x}{1+x}$.

  \item $\mu(x_1,\ldots, x_n) = x_1 + \cdots + x_n$.
\end{enumerate} 
We will verify the last function is
a monotonic monoid norm and leave the verification of the other three to the
reader. For any $\vx=(x_1,\ldots, x_n) \in \Rr_{\ge 0}^n$, since each $x_i$
is non-negative, $\mu(\vx) = \sum_{i=1}^n x_i \ge 0$ and $\mu(\vx) = 0$ if and
only if each $x_i = 0$, i.e. $\vx = \vec{0}$. Moreover, since $\vx + \vx' =
(x_1+x_1', \ldots, x_n+x_n')$, so $\mu(\vx+\vx')$ actually equals $\mu(\vx)
+ \mu(\vx')$. Thus, $\mu$ is a monoid norm on $\Rr^n_{\ge 0}$. Finally,
suppose $\vx \le \vx'$; that is, $x_i \le x_i'$ for each $1 \le i \le n$
and so $\mu(\vx) = \sum_{i=1}^n x_i \le \sum_{i=1}^n x_i' = \mu(\vx')$.
Therefore, $\mu$ is monotonic.
\end{example} 
Some monoid norms on $\Rr^n_{\ge 0}$ are clearly not restrictions of norms on
$\Rr^n$. One example is the ceiling function $x \mapsto \ceil{x}$. Another
example is the function defined by $\mu(\vec{0})=0$ and $\mu(\vx) = 1$ for
all $\vx \neq \vec{0}$. Both of them fail the homogeneity property for being
a norm. On the other hand, many familiar norms on $\Rr^n$, e.g. the $\ell_p$
norms, are monotonic on $\Rr_{\ge 0}^n$. In the literature, norms that are
monotonic on various orthants are studied under the name of orthant-monotonic
norms. Despite the fact that there are numerous characterizations of these
norms~\cite{absmono, charnorms}, we were unable to find in the literature an
explicitly given norm on $\Rr^n$ that is not monotonic on $\Rr_{\ge 0}^n$. So
it may be worthwhile to include a family of examples here.

\begin{minipage}[r]{.49\textwidth}
  \begin{example}
  \label{ex:non_orthant-monotonic} The function $\nu_n \colon \Rr^n \to \Rr$
  ($n \ge 2$) defined by \[
    \nu_n(\vx) = \max_{1\le k \le n}|x_k| + \sum_{i < j} |x_i -x_j|
  \] is a norm on $\Rr^n$ restricting to a monoid norm on $\Rr^n_{\ge 0}$
  that is not monotonic. For instance, $(0,2, \ldots, 2) \le (1,2, \ldots,
  2)$ in $\Rr^n_{\ge 0}$ and yet 
  \begin{align*}
    \nu_n(0,2, \ldots, 2) &= 2+2(n-1) > 2+(n-1)\\ &= \nu_n(1,2, \ldots, 2).
  \end{align*}
\end{example} 
\end{minipage} 
\begin{minipage}[c]{.5\textwidth}
  \centering \captionsetup{type=figure}
  \includegraphics[width=0.8\textwidth]{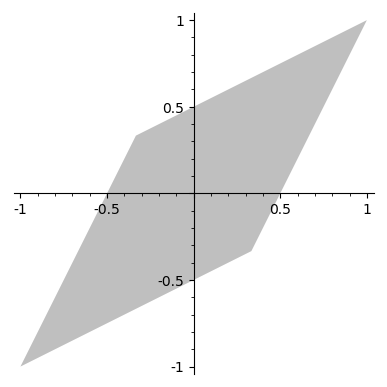} \captionof{figure}{The unit
  ball of $\nu_2$.} \label{fig:ub}
\end{minipage}

The following statement is the key in unifying various constructions of
metrics by the notion of monotonic monoid norms.  
\begin{theorem}
  Let $(E_i, d_i)$, $1 \le i \le n$, be metric spaces and $\mu$ be a monotonic
  monoid norm on $\Rr^n_{\ge 0}$. Then $\mu \circ \vec{d}$, where $\vec{d} =
  \prod_{i=1}^n d_i$, is a metric on the Cartesian product $\prod_{i=1}^n E_i$.
  \label{th:mmn-metric}
\end{theorem}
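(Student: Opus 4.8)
The plan is to set $D := \mu \circ \vec{d}$, so that for $x = (x_1,\dots,x_n)$ and $y = (y_1,\dots,y_n)$ in $\prod_{i=1}^n E_i$ we have $D(x,y) = \mu\bigl(d_1(x_1,y_1),\dots,d_n(x_n,y_n)\bigr)$, and to check the four defining properties of a metric one at a time. Writing $\vec{d}(x,y) = (d_1(x_1,y_1),\dots,d_n(x_n,y_n)) \in \Rr^n_{\ge 0}$, non-negativity of $D$ is immediate from property~(1) of a monoid norm. For the identity of indiscernibles, property~(2) gives $D(x,y) = 0$ if and only if $\vec{d}(x,y) = \vec{0}$; since each coordinate $d_i(x_i,y_i)$ is already non-negative, this vector equals $\vec{0}$ precisely when $d_i(x_i,y_i) = 0$ for every $i$, which by the corresponding axiom for each metric $d_i$ means $x_i = y_i$ for every $i$, i.e. $x = y$. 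Symmetry of $D$ is inherited coordinatewise from the symmetry of the $d_i$, since then $\vec{d}(x,y) = \vec{d}(y,x)$.

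The only step that uses the structural hypotheses on $\mu$, and the one I expect to be the sole non-routine point, is the triangle inequality. Given $x, y, z \in \prod_{i=1}^n E_i$, I would first apply the triangle inequality of each $d_i$ to obtain $d_i(x_i,z_i) \le d_i(x_i,y_i) + d_i(y_i,z_i)$ for every $i$; by the definition of the product order on $\Rr^n_{\ge 0}$ this is exactly the vector inequality $\vec{d}(x,z) \le \vec{d}(x,y) + \vec{d}(y,z)$. Applying monotonicity of $\mu$ to this inequality gives $\mu\bigl(\vec{d}(x,z)\bigr) \le \mu\bigl(\vec{d}(x,y) + \vec{d}(y,z)\bigr)$, and then subadditivity (property~(3)) gives $\mu\bigl(\vec{d}(x,y) + \vec{d}(y,z)\bigr) \le \mu\bigl(\vec{d}(x,y)\bigr) + \mu\bigl(\vec{d}(y,z)\bigr)$. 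Composing the two inequalities yields $D(x,z) \le D(x,y) + D(y,z)$, which completes the verification.

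In short, the proof is a short piece of bookkeeping whose content is that monotonicity is precisely the ingredient needed to turn the coordinatewise triangle inequalities --- which by themselves only produce an inequality between vectors --- into an inequality between $\mu$-values, after which subadditivity finishes the argument. I anticipate no further obstacle. It is perhaps worth noting in a remark afterwards that both hypotheses on $\mu$ are genuinely used: dropping subadditivity already fails for $n = 1$, and dropping monotonicity fails for the norm $\nu_n$ of Example~\ref{ex:non_orthant-monotonic}, so the theorem is stated at essentially the right level of generality.
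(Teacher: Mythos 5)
Your proof is correct and follows essentially the same route as the paper's: verify non-negativity, symmetry, and the identity of indiscernibles coordinatewise, then obtain the triangle inequality by applying monotonicity of $\mu$ to the vector inequality $\vec{d}(x,z) \le \vec{d}(x,y) + \vec{d}(y,z)$ followed by subadditivity. Your closing remark that both hypotheses on $\mu$ are necessary (with $\nu_n$ witnessing the failure without monotonicity) is a nice addition not present in the paper's proof, but the argument itself is the same.
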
 
\begin{proof}
  Since each $d_i$ is non-negative, the range of $\vec{d}$ is in $\Rr_{\ge
  0}^n$. Thus, the function $\mu \circ \vec{d}$ is well defined. Since $\mu$
  takes only non-negative values, so does $\mu \circ \vec{d}$ and since each
  $d_i$ is symmetric, $\vec{d}(\vec{p}, \vec{q}) = \vec{d}(\vec{q},\vec{p})$
  for any $\vec{p}, \vec{q} \in \prod_{i=1}^n E_i$.

  For $1 \le i \le n$, since $d_i(p_i,p_i) = 0$ for every $p_i \in E_i$,
  $\vd(\vp,\vp) = \vec{0}$ and hence $\mu \circ \vd(\vp,\vp) = 0$ for each
  $\vp \in \prod_{i=1}^n E_i$. Because $\mu$ only maps $\vec{0}$ to $0$, $\mu
  \circ \vd(\vp, \vq) = 0$ implies $\vd(\vec{p}, \vec{q}) = \vec{0}$. The
  last equation means $d_i(p_i,q_i) = 0$ ($1 \le i \le n$) and so $p_i=q_i$
  since each $d_i$ is a metric. Therefore, $\vec{p} = \vec{q}$.

  Finally, since each $d_i$ satisfies the triangle inequality, $\vd(\vp,
  \vq) \le \vd(\vp,\vr) + \vd(\vr, \vq)$ in the product order of $\Rr^n_{\ge
  0}$. It then follows from the monotonicity and the subadditivity of $\mu$
  that 
  \begin{align*}
    \mu \circ \vd(\vp,\vq) \le \mu \left( \vd(\vp, \vr) +\vd(\vr, \vq)
    \right) \le \mu \circ \vd(\vp,\vr) + \mu \circ \vd(\vr, \vq).
  \end{align*} 
  Thus, $\mu \circ \vd$ is indeed a metric on $\prod_{i=1}^nE_i$.  
\end{proof}

The next few propositions about metrics, in the light of
Theorem~\ref{th:mmn-metric}, are all consequences of the fact that the
functions in Example~\ref{ex:mmn} are monotonic monoid norms.
\begin{proposition}
  If $d$ is a metric, then so are $\alpha d$ $(\alpha > 0)$, $\ceil{d}$
  and $d_{\flat} := \dfrac{d}{1+d}$.  \label{p:various}
\end{proposition}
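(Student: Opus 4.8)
The plan is to observe that all three claims are special cases of Theorem~\ref{th:mmn-metric} taken with $n = 1$. Since $\Rr_{\ge 0} = \Rr^1_{\ge 0}$, the three functions $\mu_1(x) = \alpha x$, $\mu_2(x) = \ceil{x}$ and $\mu_3(x) = x/(1+x)$ are precisely items (i), (ii) and (iii) of Example~\ref{ex:mmn}, and $\mu_1 \circ d = \alpha d$, $\mu_2 \circ d = \ceil{d}$, $\mu_3 \circ d = d_\flat$. So, granting that $\mu_1,\mu_2,\mu_3$ are monotonic monoid norms, Theorem~\ref{th:mmn-metric} applied to the single metric space $(E,d)$ immediately gives that $\alpha d$, $\ceil{d}$ and $d_\flat$ are metrics. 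This route is cleaner than checking the metric axioms by hand in each case, since the triangle inequality is the delicate one and it is already packaged inside the theorem.

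Hence the actual work is the verification, left to the reader in Example~\ref{ex:mmn}, that $\mu_1,\mu_2,\mu_3$ satisfy the three monoid-norm axioms together with monotonicity on $\Rr_{\ge 0}$. For $\mu_1$ everything is immediate from $\alpha > 0$ (subadditivity is in fact an equality). For $\mu_2 = \ceil{\cdot}$: non-negativity and monotonicity on $\Rr_{\ge 0}$ are clear, $\ceil{x} = 0 \iff x = 0$ because $0 \le x$, and subadditivity is the standard fact $\ceil{x+y} \le \ceil{x} + \ceil{y}$, obtained by adding $x \le \ceil{x}$ and $y \le \ceil{y}$ and noting the right side is an integer dominating $x+y$. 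For $\mu_3(x) = x/(1+x)$: it is non-negative, vanishes only at $x = 0$, and is increasing on $\Rr_{\ge 0}$ (write it as $1 - 1/(1+x)$); subadditivity follows from
\begin{align*}
  \frac{x+y}{1+x+y} = \frac{x}{1+x+y} + \frac{y}{1+x+y} \le \frac{x}{1+x} + \frac{y}{1+y},
\end{align*}
using $1+x \le 1+x+y$ and $1+y \le 1+x+y$ for $x,y \ge 0$.

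I do not anticipate a real obstacle: once these verifications are recorded, invoking Theorem~\ref{th:mmn-metric} with $n=1$ closes the argument. The only mildly non-routine point is the subadditivity of $\mu_3$, which the displayed decomposition settles; everything else is a one-line check.
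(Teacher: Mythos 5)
Your proposal is correct and follows exactly the route the paper intends: Proposition~\ref{p:various} is stated there as an immediate consequence of Theorem~\ref{th:mmn-metric} applied with $n=1$ to the monotonic monoid norms (i)--(iii) of Example~\ref{ex:mmn}, whose verification the paper leaves to the reader. Your explicit checks of those three norms (including the subadditivity of $x \mapsto x/(1+x)$) are accurate and simply fill in the details the paper omits.
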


\begin{proposition}
  Let $(E_i,d_i)$, $1 \le i \le n$, be metric spaces, then the map $d$
  defined by $d(\vec{p}, \vec{q}) = \sum_{i=1}^n d_i(p_i, q_i)$ is a metric
  on $\prod_{i=1}^n E_i$. \label{p:prod-metric}
\end{proposition}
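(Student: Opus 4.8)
The plan is to derive this immediately from Theorem~\ref{th:mmn-metric}. First I would observe that the map $d$ in the statement is precisely $\mu \circ \vec{d}$, where $\vec{d} = \prod_{i=1}^n d_i$ and $\mu \colon \Rr^n_{\ge 0} \to \Rr$ is the summation function $\mu(x_1,\ldots,x_n) = x_1 + \cdots + x_n$ appearing as item (iv) of Example~\ref{ex:mmn}. Indeed, for $\vec{p},\vec{q} \in \prod_{i=1}^n E_i$ one has $\vec{d}(\vec{p},\vec{q}) = (d_1(p_1,q_1),\ldots,d_n(p_n,q_n))$, so $\mu \circ \vec{d}(\vec{p},\vec{q}) = \sum_{i=1}^n d_i(p_i,q_i) = d(\vec{p},\vec{q})$.

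Next I would invoke the verification already carried out in Example~\ref{ex:mmn}, which shows that this $\mu$ is a monotonic monoid norm on $\Rr^n_{\ge 0}$. With both hypotheses of Theorem~\ref{th:mmn-metric} now in place — the $(E_i,d_i)$ are metric spaces and $\mu$ is a monotonic monoid norm on $\Rr^n_{\ge 0}$ — the theorem yields that $\mu \circ \vec{d} = d$ is a metric on $\prod_{i=1}^n E_i$, which is exactly the assertion.

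There is essentially no obstacle here: the proposition is a direct corollary of Theorem~\ref{th:mmn-metric}, and the only point requiring care is to match the notation correctly, namely that the integer $n$ in the proposition simultaneously indexes the metric spaces and the coordinates of $\mu$. One could instead give a self-contained argument checking non-negativity, the identity of indiscernibles, symmetry, and the triangle inequality by hand — the triangle inequality following by summing the $n$ inequalities $d_i(p_i,q_i) \le d_i(p_i,r_i) + d_i(r_i,q_i)$ — but this merely re-runs the proof of Theorem~\ref{th:mmn-metric} in a special case, so the corollary route is cleaner and is the one I would take.
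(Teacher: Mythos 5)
Your proposal is correct and matches the paper's own route: the paper explicitly presents this proposition (together with the others following Theorem~\ref{th:mmn-metric}) as a direct consequence of that theorem applied to the summation norm verified in Example~\ref{ex:mmn}(iv). Nothing is missing.
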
 
\begin{corollary}
  The sum of finitely many metrics on a set $E$ is metric on $E$.
  \label{c:finite_sums}
\end{corollary}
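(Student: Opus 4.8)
The plan is to deduce this from Proposition~\ref{p:prod-metric} via the diagonal embedding. Given metrics $d_1, \ldots, d_n$ on a single set $E$, I would first regard each pair $(E, d_i)$ as a metric space in its own right — the underlying set is the same for all $i$, only the metric varies — so that Proposition~\ref{p:prod-metric} applies and tells us that
\[
  D(\vp, \vq) = \sum_{i=1}^n d_i(p_i, q_i)
\]
is a metric on $E^n = \prod_{i=1}^n E$.

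Next I would introduce the diagonal map $\delta \colon E \to E^n$, $\delta(p) = (p, \ldots, p)$, and record the identity
\[
  \sum_{i=1}^n d_i(p,q) = D(\delta(p), \delta(q))
\]
for all $p, q \in E$; that is, $\sum_{i=1}^n d_i$ is the pullback of the metric $D$ along $\delta$. Then I would observe that a pullback of a metric along any map inherits non-negativity, symmetry and the triangle inequality verbatim from $D$, and that the identity-of-indiscernibles axiom holds here because $\delta$ is injective: if $\sum_i d_i(p,q) = D(\delta(p),\delta(q)) = 0$, then $\delta(p) = \delta(q)$, whence $p = q$ by comparing first coordinates. This finishes the proof.

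There is essentially no obstacle; the only point worth a moment's care is that Proposition~\ref{p:prod-metric} is phrased for a family of metric spaces that may differ, so one should note it still applies when all the underlying sets coincide. As an alternative route, one could apply Theorem~\ref{th:mmn-metric} directly with each $E_i = E$, each $d_i$ one of the given metrics, and $\mu(x_1,\ldots,x_n) = x_1 + \cdots + x_n$ the monotonic monoid norm of Example~\ref{ex:mmn}(iv), and then restrict the resulting metric on $\prod_i E$ to the diagonal copy of $E$; either way the argument is a single line.
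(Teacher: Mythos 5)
Your proposal is correct and follows the paper's own argument essentially verbatim: apply Proposition~\ref{p:prod-metric} to get the sum metric on $E^n$, then restrict to (pull back along) the diagonal embedding $p \mapsto (p,\ldots,p)$, whose injectivity gives the identity-of-indiscernibles axiom. The extra remark about pullbacks along injective maps and the alternative via Theorem~\ref{th:mmn-metric} are fine but do not change the route.
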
 
\begin{proof}
  Let $d_1, \ldots, d_n$ be metrics on $E$. By Proposition~\ref{p:prod-metric},
  $d(\vec{p}, \vec{q}) = \sum_{i=1}^n d_i(p_i,q_i)$ is a metric on $E^n$
  and hence on its diagonal which can be identified with $E$ itself via
  $p \mapsto \iota(p)=(p,\ldots, p)$. Thus, $d(p,q) = d \left( \iota(p),
  \iota(q) \right) = \sum_{i=1}^n d_i(p,q)$ is a metric on $E$.
\end{proof}

\section{Refinements} \label{sec:refinement} 

We propose the following notion for functions from a Cartesian product to
a totally ordered set.
\begin{definition}
  \label{df:refine} Let $f$ and $f'$ be functions from a Cartesian product
  $X$ to a totally ordered set $(T, \le)$. We say that $f'$ {\bf refines} $f$
  if $f'(\vx) < f'(\vx')$ whenever $f(\vx) < f(\vx')$ for any $\vx,\vx' \in
  X$ with all but one coordinate the same. We write $f' \preceq f$ if $f'$
  refines $f$.
\end{definition}
The relation $\preceq$ is clearly reflexive and transitive. It is however not
anti-symmetric. For example, the identity function of $\Rr$ and twice this
function refine each other. More generally, if $\sigma$ is an order-preserving
embedding from the range of $f$ to $T$ then $f$ refines $\sigma\circ f$
and vice versa. By a {\bf refinement class}, we mean an equivalence class
of the equivalence relation in which two functions are equivalent if they
refine each other. Note that the refinement relation $\preceq$ induces a
partial order, still denoted by $\preceq$, on refinement classes.

For unary functions, $f'$ refines $f$ means $f'$ never contradicts $f$
on strict inequalities. For binary functions, $f'$ refines $f$ if and
only if for all $(x_1,x_2) \in X$, $f'(x_1,\phd)$ refines $f(x_1,\phd)$
and $f'(\phd,x_2)$ refines $f(\phd,x_2)$. Metrics are symmetric binary
functions, so for metrics $d$ and $d'$ on the same set $E$, $d'$ refines $d$
means for any $p,q,r \in E$, $d'(p,q) < d'(p,r)$ whenever $d(p,q) < d(p,r)$
or, equivalently, $d'(p,\phd)$ refines $d(p,\phd)$ for each $p \in E$.

There is also a graph theoretic interpretation of refinement. Suppose $X$
is the Cartesian product of a family $\{X_i\}_{i\in I}$ of sets. Let $K_i$
be the complete graph with vertex set $X_i$ ($i \in I$). Let $G_X$ be the
Cartesian product of the family $\{K_i\}_{i \in I}$ of graphs. In other words,
$G_X$ is the graph with vertex set $X$ where two elements of $X$ are adjacent
in $G_X$ if and only if they differ at exactly one $i \in I$. Note that $G_X$
is connected if the index set $I$ is a finite set. On the other hand, $G_X$
is disconnected if each $X_i$ is nonempty and $|X_i| \ge 2$ for infinitely
many $i \in I$. A function $f$ from $X$ to a totally ordered set $T$ can
be viewed as a ``potential function'' on $G_X$ that gives a directed graph
structure on $G_X$: an edge of $G_X$ between $\vx$ and $\vx'$ becomes an
arc from $\vx$ to $\vx'$ if $f(\vx') \le f(\vx)$. We denote the resulting
directed graph by $G_X(f)$. In this setup, $f'$ refines $f$ simply means
$G_X(f')$ is a directed subgraph of $G_X(f)$. We say that a function from $X$
to $T$ is locally constant if it is constant on each connected component
of $G_X$. It is clear that any function from $X$ to $T$ refines a locally
constant function. Consequently, the locally constant functions form the
greatest element in the partial order $\preceq$. The directed graph of this
class, denoted by $G_X(*)$, is obtained by replacing each edge in $G_X$
by a pair of opposing arcs. On the other hand, the refinement class of a
proper coloring of $G_X$, i.e. a function that assigns distinct elements of
$T$ to neighbors in $G_X$, is minimal in $\preceq$. Let us illustrate these
ideas by the following simple example.
\begin{example}
  Let $X$ be the Cartesian power $\{0,1\}^2$ and $T$ be the totally ordered
  set $\{ \cz , \cy \}$ with $\cz < \cy$. Then $G_X$ is the square graph.
  The diagram in Figure~\ref{fig:por} shows the directed graphs and the
  relation between the refinement classes of four functions from $X$ to $T$.
  \tikzset{
    auto,node distance =1 cm and 1 cm,semithick,
    state/.style ={circle, draw, minimum width = 0.7 cm},
    point/.style = {circle, draw, inner sep=0.04cm,fill,node contents={}},
    ma/.style={decoration={
	markings,
    mark=at position .5 with {\arrow[>=Latex]{>}}}, postaction={decorate}},
    ca/.style={decoration={
	markings,
    mark=at position .6 with {\arrow[>=Latex]{>}}}, postaction={decorate}},
    bidirected/.style={Latex-Latex},
    el/.style = {inner sep=2pt, align=left, sloped}
}
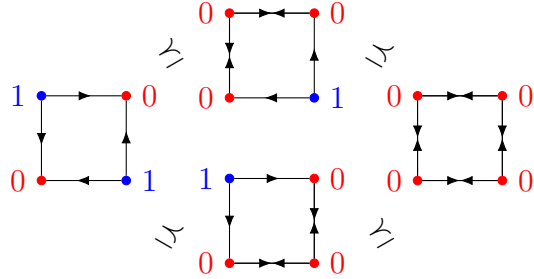
\begin{figure}[ht]
  \centering
\begin{tikzpicture}
  \node[red] (00) at (0,0) [label=left:\cz, point];
  \node[blue] (10) [right =of 00] [label=right:\cy, point];
  \node[red] (11) [above =of 10] [label=right:\cz, point];
  \node[blue] (01) [above =of 00] [label=left:\cy, point];

  \node (r1) at (1.7,1.7) [rotate=30] {$\preceq$};
  \node (r2) at (1.7, -0.7) [rotate=-30] {$\preceq$};
  \node (r3) at (4.5, 1.7) [rotate=-30] {$\preceq$};
  \node (r4) at (4.5, -0.7) [rotate=30] {$\preceq$};

  \node[red] (00a) at (2.5,1.1) [label=left:\cz, point];
  \node[blue] (10a) [right =of 00a] [label=right:\cy, point];
  \node[red] (11a) [above =of 10a] [label=right:\cz, point];
  \node[red] (01a) [above =of 00a] [label=left:\cz, point];

  \node[red] (00b) at (2.5,-1.1) [label=left:\cz, point];
  \node[red] (10b) [right =of 00b] [label=right:\cz, point];
  \node[red] (11b) [above =of 10b] [label=right:\cz, point];
  \node[blue] (01b) [above =of 00b] [label=left:\cy, point];

  \node[red] (00c) at (5,0) [label=left:\cz, point];
  \node[red] (10c) [right =of 00c] [label=right:\cz, point];
  \node[red] (11c) [above =of 10c] [label=right:\cz, point];
  \node[red] (01c) [above =of 00c] [label=left:\cz, point];

  \path (10) edge[ca] (00);
  \path (10) edge[ca] (11); 
  \path (01) edge[ca] (11);
  \path (01) edge[ca] (00);

  \path (10a) edge[ca] (00a);
  \path (10a) edge[ca] (11a); 
  \path (11a) edge[ma] (01a); \path (01a) edge[ma] (11a);
  \path (01a) edge[ma] (00a); \path (00a) edge[ma] (01a);

  \path (00b) edge[ma] (10b); \path (10b) edge[ma] (00b);
  \path (10b) edge[ma] (11b); \path (11b) edge[ma] (10b); 
  \path (01b) edge[ca] (11b);
  \path (01b) edge[ca] (00b);

  \path (00c) edge[ma] (10c); \path (10c) edge[ma] (00c);
  \path (10c) edge[ma] (11c); \path (11c) edge[ma] (10c); 
  \path (11c) edge[ma] (01c); \path (01c) edge[ma] (11c);
  \path (01c) edge[ma] (00c); \path (00c) edge[ma] (01c);

\end{tikzpicture}
  \caption{Part of the refinement order}
  \label{fig:por}
\end{figure}
 
  The constant function determines the greatest class and the
  proper coloring determines a minimal class in the refinement order.
\end{example}

Let $(T, +, 0_T, \le)$ be an ordered abelian group. For $t \in T$, we write
$|t|$ for $\max\{t,-t\}$. Let $A$ and $B$ be subsets of $T$, we write $|A|$
for the set $\{ |a| \colon a \in A\}$ and write $A > B$ if $a > b$ for every
$a \in A$ and $b \in B$. If $f$ is a function from a Cartesian product $X$ to
$T$, we write $\Delta(f)$ for the set $\{f(\vx')-f(\vx) \colon \text{$\vx$ and
$\vx'$ are adjacent in } G_X\}$ and call it the {\bf difference set} of $f$.
We write $\Delta^+(f)$ for the set of positive elements of $\Delta(f)$. Note
that $\Delta(f) = - \Delta(f)$ and so $\Delta^+(f) = |\Delta(f)| \setminus
\{0_T\}$. Consequently, $\Delta^+(f)$ is empty if and only if $f$ is constant
on neighbors in $G_X$ if and only if $f$ is locally constant.

\begin{theorem}
  \label{th:oag-refines} Let $(T,+,0_T, \le)$ be an ordered abelian group.
  If $f$ and $g$ are functions from a Cartesian product $X$ to $T$ with
  $\Delta^+(f) > \Delta^+(g)$, then $f+g$ refines $f$.
\end{theorem}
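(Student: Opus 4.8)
The plan is to unwind the definition of refinement and reduce to a short case analysis that uses only translation-invariance of the order on $T$. Fix $\vx,\vx'\in X$ adjacent in $G_X$ (that is, differing in exactly one coordinate) and suppose $f(\vx)<f(\vx')$; I must show $(f+g)(\vx)<(f+g)(\vx')$, equivalently $\bigl(f(\vx')-f(\vx)\bigr)+\bigl(g(\vx')-g(\vx)\bigr)>0_T$. Set $a=f(\vx')-f(\vx)$ and $b=g(\vx')-g(\vx)$.

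By hypothesis $a>0_T$, and since $\vx,\vx'$ are adjacent, $a\in\Delta^+(f)$. Also $b\in\Delta(g)$, hence $-b\in\Delta(g)$ by the symmetry $\Delta(g)=-\Delta(g)$ noted just before the statement. Now split on the sign of $b$. If $b\ge 0_T$, then $a+b\ge a>0_T$ and we are done. If $b<0_T$, then $-b>0_T$, so $-b\in\Delta^+(g)$, and the assumption $\Delta^+(f)>\Delta^+(g)$ forces $a>-b$, i.e. $a+b>0_T$, as required. I would handle the degenerate possibilities up front: if $\Delta^+(g)=\varnothing$ then $g$ is locally constant, so $b=0_T$ and only the first case occurs; and if $\Delta^+(f)=\varnothing$ then no adjacent pair satisfies $f(\vx)<f(\vx')$, so the conclusion holds vacuously.

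I do not expect a real obstacle here. The only ingredients are that $\le$ on $T$ respects translation (used to add the two inequalities in the first case and to pass between $a>-b$ and $a+b>0_T$ in the second) and the elementary identity $\Delta(g)=-\Delta(g)$. The sole point that repays a little care is the bookkeeping around empty difference sets, so that the vacuous instances of both the hypothesis and the conclusion are disposed of correctly before the main two-case argument is run.
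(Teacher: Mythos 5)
Your proof is correct and follows essentially the same route as the paper: both reduce to an adjacent pair with $f(\vx)<f(\vx')$ and show the $f$-increment dominates the magnitude of the $g$-increment, your two-case split on the sign of $b$ being just an unpacking of the paper's use of $|g(\vx)-g(\vx')|$ together with $\Delta(g)=-\Delta(g)$. The extra bookkeeping for empty difference sets is harmless but not even needed, since in your second case both $a$ and $-b$ witness nonemptiness and the hypothesis $\Delta^+(f)>\Delta^+(g)$ applies directly.
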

\begin{proof}
  Suppose $\vx$ and $\vx'$ in $X$ differ by one coordinate and that $f(\vx')
  > f(\vx)$. Since $|g(\vx)-g(\vx')|$ is either $0_T$ or in $\Delta^+(g)$,
  then by assumption it is smaller than $f(\vx')-f(\vx)$. Thus,
  \begin{align*}
    (f+g)(\vx') - (f+g)(\vx) &= (f(\vx') - f(\vx)) - (g(\vx) - g(\vx')) \\
    &\ge  (f(\vx') - f(\vx)) - |g(\vx) - g(\vx')| > 0_T.
  \end{align*}
  This shows that $f+g$ refines $f$.
\end{proof}
Theorem~\ref{th:oag-refines} manifests a simple idea: a function can be
refined by adding a small (relative to the function) perturbation. The next
few propositions should convince the reader that this idea is central to
our treatment of refinements.

\begin{proposition}
  Let $f, g$ be real-valued functions on a Cartesian product $X$. Then
  $f + \alpha g$ refines $f$ for any $\alpha \in \Rr$ such that
  $|\alpha|\Delta^{+}(g) < \Delta^+(f)$.  \label{p:real-valued}
\end{proposition}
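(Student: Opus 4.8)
The plan is to reduce Proposition~\ref{p:real-valued} directly to Theorem~\ref{th:oag-refines} by choosing the right ordered abelian group. The real numbers $\Rr$ with the usual addition and order form an ordered abelian group, so the theorem applies to any pair of $\Rr$-valued functions on $X$. The only gap between the hypothesis of the proposition and that of the theorem is that we want to refine $f$ by $f+\alpha g$ rather than by $f+g$, and the smallness condition is phrased in terms of $|\alpha|\Delta^+(g)$ instead of $\Delta^+(\text{something})$.

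First I would dispose of the trivial case $\alpha = 0$: then $f + \alpha g = f$, and $\preceq$ is reflexive (noted right after Definition~\ref{df:refine}), so $f + \alpha g$ refines $f$. For $\alpha \neq 0$, I would set $h = \alpha g$ and observe that $\Delta(h) = \alpha\,\Delta(g)$, hence $\Delta^+(h) = |\Delta(h)| \setminus \{0\}$; since scaling by a nonzero real multiplies absolute values by $|\alpha|$, this gives $\Delta^+(h) = |\alpha|\,\Delta^+(g)$. Actually one should be a little careful about what $|\alpha|\Delta^+(g)$ denotes — reading it as the set $\{|\alpha| s : s \in \Delta^+(g)\}$, the identity $\Delta^+(\alpha g) = |\alpha|\,\Delta^+(g)$ is immediate, because an adjacent pair $\vx,\vx'$ in $G_X$ contributes $\alpha(g(\vx')-g(\vx))$ to $\Delta(\alpha g)$, and its absolute value is $|\alpha|\,|g(\vx')-g(\vx)|$. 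Therefore the proposition's hypothesis $|\alpha|\Delta^+(g) < \Delta^+(f)$ is exactly $\Delta^+(h) < \Delta^+(f)$, i.e. $\Delta^+(f) > \Delta^+(h)$ in the notation of the theorem.

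With that translation in hand, Theorem~\ref{th:oag-refines} applied to the pair $f, h$ over the ordered abelian group $\Rr$ yields that $f + h = f + \alpha g$ refines $f$, which is the claim. I would write this up in essentially three lines: handle $\alpha = 0$, record the identity $\Delta^+(\alpha g) = |\alpha|\,\Delta^+(g)$ for $\alpha \neq 0$, and invoke the theorem.

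I do not expect a genuine obstacle here; the proposition is a corollary. The one point needing a sentence of care is the set identity $\Delta^+(\alpha g) = |\alpha|\,\Delta^+(g)$ and the precise meaning of the scaled set $|\alpha|\Delta^+(g)$, together with noting that when $g$ is locally constant both sides are empty and the inequality $\emptyset < \Delta^+(f)$ holds vacuously (and in that degenerate case $f+\alpha g$ is itself locally-constant-plus-$f$, but the theorem still applies and delivers the conclusion, or one observes $f+\alpha g = f + (\text{locally constant})$ refines $f$ directly). Everything else is a direct substitution into the already-proved theorem.
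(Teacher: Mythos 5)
Your proposal is correct and follows essentially the same route as the paper: dispose of $\alpha=0$ trivially, establish the set identity $\Delta^+(\alpha g)=|\alpha|\Delta^+(g)$ for $\alpha\neq 0$, and invoke Theorem~\ref{th:oag-refines} over the ordered abelian group $\Rr$. No gaps.
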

\begin{proof}
  The proposition is trivial for $\alpha =0$. For $\alpha \neq 0$,
  \begin{align*}
    \Delta^+(\alpha g) &= |\Delta(\alpha g)| \setminus \{0\}
    = |\alpha||\Delta(g)| \setminus \{0\} = |\alpha|\left( |\Delta(g)|
    \setminus \{0\} \right) = |\alpha|\Delta^+(g), \label{eq:factor}
  \end{align*}
  and so it follows from Theorem~\ref{th:oag-refines}.
\end{proof}
Readers who are familiar with ordered fields will certainly recognize the
validity of Proposition~\ref{p:real-valued} and Corollary~\ref{c:finite_case}
below in that setting because the properties of $\Rr$ being used in their
proofs are those of an ordered field.
\begin{corollary}
  Suppose the difference sets of the functions $f$ and $g$ in
  Proposition~\ref{p:real-valued} are finite. Then there exists $\alpha_0 \in
  (0, \infty]$, depending on both $f$ and $g$, such that $f + \alpha g$ refines
  $f$ for any $\alpha$ with $|\alpha| < \alpha_0$.  \label{c:finite_case}
\end{corollary}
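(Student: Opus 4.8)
The plan is to derive this directly from Proposition~\ref{p:real-valued} by converting the hypothesis $|\alpha|\,\Delta^+(g) < \Delta^+(f)$ into a concrete bound on $|\alpha|$. Since both $\Delta^+(f)$ and $\Delta^+(g)$ are finite sets of positive reals, I can name the relevant extremal values: set $m = \min \Delta^+(f)$ (or $m = +\infty$ if $\Delta^+(f) = \emptyset$) and $M = \max \Delta^+(g)$ (taking the convention that $M = 0$, or simply skipping this case, if $\Delta^+(g) = \emptyset$).

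First I would dispose of the degenerate cases. If $\Delta^+(g)$ is empty, then $g$ is locally constant, every $\alpha g$ still has empty $\Delta^+$, and $f + \alpha g$ refines $f$ for every $\alpha \in \Rr$, so we may take $\alpha_0 = \infty$. If $\Delta^+(f)$ is empty while $\Delta^+(g)$ is not, then the condition $|\alpha|\Delta^+(g) < \Delta^+(f)$ holds vacuously only when $|\alpha|\Delta^+(g)$ is also empty, i.e. $\alpha = 0$; here the safe choice is $\alpha_0$ with the understanding that the conclusion is only guaranteed on $|\alpha| < \alpha_0$, and taking $\alpha_0$ to be any positive number fails unless we interpret things carefully — but in fact when $f$ is locally constant, $f + \alpha g$ refines $f$ trivially for \emph{every} $\alpha$ because $f$ imposes no strict inequalities, so again $\alpha_0 = \infty$ works. (I would phrase the statement so that these boundary cases are absorbed cleanly.)

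The main case is $\Delta^+(f), \Delta^+(g)$ both finite and nonempty. Then $m = \min\Delta^+(f) > 0$ and $M = \max\Delta^+(g) > 0$, and I claim $\alpha_0 := m/M$ does the job. Indeed, if $|\alpha| < m/M$, then for every $b \in \Delta^+(g)$ and every $a \in \Delta^+(f)$ we have $|\alpha|\, b \le |\alpha|\, M < m \le a$, so $|\alpha|\Delta^+(g) < \Delta^+(f)$ in the sense of the paper's notation; Proposition~\ref{p:real-valued} then yields that $f + \alpha g$ refines $f$. This is the whole argument.

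I do not anticipate a genuine obstacle here — the content is just making the ``small perturbation'' heuristic quantitative, and finiteness is exactly what lets us replace the set inequality by a single ratio. The only point requiring a little care is the bookkeeping of the empty-difference-set conventions so that the single statement ``there exists $\alpha_0 \in (0,\infty]$'' covers all sub-cases uniformly; I would handle that by noting at the outset that local constancy of $f$ or of $g$ forces the conclusion for all $\alpha$, and otherwise invoking the explicit $\alpha_0 = \min\Delta^+(f)/\max\Delta^+(g)$.
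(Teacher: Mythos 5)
Your proposal is correct and matches the paper's own proof essentially step for step: dispose of the cases where $\Delta^+(f)$ or $\Delta^+(g)$ is empty by taking $\alpha_0=\infty$, and otherwise set $\alpha_0=\min\Delta^+(f)/\max\Delta^+(g)$ and invoke Proposition~\ref{p:real-valued}. The only cosmetic difference is that you treat the two empty cases separately (noting that local constancy of $f$ makes refinement vacuous), whereas the paper handles them in one sentence; the substance is identical.
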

\begin{proof}
  The assumption is equivalent to the sets $\Delta^+(f)$ and $\Delta^+(g)$
  being finite. If either of them is empty, then either $f$ or $g$ is
  constant on neighbors in $G_X$ and so $f + \alpha g$ refines $f$ for any
  $\alpha \in \Rr$. We then establish the corollary by taking $\alpha_0 =
  \infty$. If $\Delta^+(f)$ and $\Delta^+(g)$ are nonempty, then both $m
  := \min \Delta^+(f)$ and $M := \max \Delta^+(g)$ exist and are positive
  numbers. So for any $\alpha$ with $|\alpha| < \alpha_0:= m/M$,
  \[
    |\alpha| \Delta^+(g) \le |\alpha|M < m \le \Delta^{+}(f).
  \]
  Thus, the corollary follows from Proposition~\ref{p:real-valued}.
\end{proof}
We now turn to the case in which the functions
involved are metrics.
\begin{proposition}
  Let $d,d_0$ be metrics on a set $E$ such that $d$ is $\lambda \Zz$-valued
  and $d_0$ is $[0,\lambda)$-valued for some positive $\lambda$. Then $d+d_0$
  is a metric refining $d$. In particular, if $d$ takes integer values then
  $d + d'_{\flat}$ is a metric refining $d$ for any metric $d'$ on $E$.
  \label{p:metric_refine}
\end{proposition}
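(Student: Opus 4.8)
The plan is to obtain both assertions from Theorem~\ref{th:oag-refines} (with the ordered abelian group $T = \Rr$) together with Corollary~\ref{c:finite_sums}. First I would record that $d + d_0$ is a metric on $E$: it is a sum of two metrics on $E$, so Corollary~\ref{c:finite_sums} applies directly. It then remains to check that $d + d_0$ refines $d$, where these are regarded as functions on the Cartesian product $X = E \times E$; the equivalence of this with the metric formulation of refinement was already observed after Definition~\ref{df:refine}, so I would invoke it without further comment.

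The core of the argument is a comparison of the two difference sets. Adjacent vertices of $G_X$ are pairs $\{(p,q),(p,r)\}$ or $\{(p,q),(p',q)\}$, and by symmetry of the metrics it suffices to consider the first type. Since $d$ is $\lambda\Zz$-valued, every element of $\Delta(d)$ lies in $\lambda\Zz$, so every element of $\Delta^+(d)$ is at least $\lambda$. Since $d_0$ is $[0,\lambda)$-valued, for adjacent vertices one has $|d_0(p,q) - d_0(p,r)| < \lambda$ (in fact $\le d_0(q,r) < \lambda$ by the triangle inequality), so every element of $\Delta^+(d_0)$ is strictly less than $\lambda$. Hence $\Delta^+(d) > \Delta^+(d_0)$ --- every member of the former being $\ge \lambda$ and every member of the latter being $< \lambda$ --- and this also holds (vacuously) in the degenerate cases where one of these sets is empty. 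Theorem~\ref{th:oag-refines} with $f = d$ and $g = d_0$ now yields that $d + d_0$ refines $d$. For the ``in particular'' clause I would take $\lambda = 1$: for any metric $d'$ on $E$, Proposition~\ref{p:various} gives that $d'_{\flat} = d'/(1+d')$ is a metric, and $0 \le d'(p,q)/(1+d'(p,q)) < 1$ shows it is $[0,1)$-valued, so the first part applied with $d_0 = d'_{\flat}$ finishes the proof.

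I do not expect a genuine obstacle; the proof is essentially bookkeeping once Theorem~\ref{th:oag-refines} is in hand. The only point requiring a little care is the upper bound on $\Delta^+(d_0)$: it must come from the hypothesis that the \emph{values} of $d_0$ lie in $[0,\lambda)$ (directly, or via the triangle inequality), not from any feature of $d$. One should also remember that $A > B$ is vacuously true when $A$ or $B$ is empty, which is exactly what makes the trivial cases (such as $|E| \le 1$) go through without a separate argument.
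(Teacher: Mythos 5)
Your proposal is correct and follows essentially the same route as the paper: establish that $d+d_0$ is a metric because a sum of metrics is a metric, verify $\Delta^+(d_0) \subseteq (0,\lambda) < \Delta^+(d) \subseteq \lambda\Nn$ and apply Theorem~\ref{th:oag-refines}, then get the special case from Proposition~\ref{p:various} with $\lambda = 1$. Your write-up merely spells out the difference-set comparison in a bit more detail than the paper does; there is no substantive difference.
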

\begin{proof}
  The sum of two metrics is a metric (Proposition~\ref{p:prod-metric}) so
  $d+d_0$ is a metric on $E$. Since
  \[
    \Delta^+(d_0) \subseteq (0,\lambda) < \Delta^+(d) \subseteq \lambda\Nn,
  \]
  it follows from Theorem~\ref{th:oag-refines} that $d+d_0$ refines $d$. The
  special case follows because $d'_{\flat}$ is a $[0,1)$-valued metric for
  any metric $d'$ (Proposition~\ref{p:various}).
\end{proof}
The metric $d+d_0$ in Proposition~\ref{p:metric_refine} need not be
taking values in a discrete set. So the process of refining metrics
given by that proposition cannot be iterated in general. However,
Corollary~\ref{c:finite_case} can be used instead if the metrics involved
have finite difference sets. This happens if the metric space is finite or
more generally when the metrics involved take only finitely many values. For
instance, given a sequence $d_1, \ldots, d_n$ of metrics on a finite graph
$G$ we construct another sequence of metrics $(d'_i)_{1 \le i \le n}$ on $G$
as follows: let $d'_1 = d_1$ and suppose $d'_i$ has been constructed for
some $1 \le i \le n$. Then according to Corollary~\ref{c:finite_case},
we can construct a refinement $d'_{i+1}$ of $d'_i$ by adding to it a
suitable positive multiple of $d_{i+1}$. By Proposition~\ref{p:various}
and Proposition~\ref{p:prod-metric}, $d'_{i+1}$ is still a metric. Thus,
the construction of $(d'_i)_{1 \le i \le n}$ is completed by induction. All
metrics in the original sequence contribute to $d'_n$ the last metric in the
new sequence. Moreover, the new metrics respect the sequence's order in the
sense that for any $1 \le i \le n-1$ the relative closeness of vertices in
$G$ determined by $d'_i$ will not be contradicted by $d'_{i+1}$.

\section{Refinements of the Shortest-Path Distance}
\label{sec:metric+proximity}

We now focus on refining a particular metric---the shortest-path
distance. General results about refinements in the previous section still
apply. The difference here is that the refinements are obtained by modifying
the shortest-path distance by functions that are not metrics themselves and
the challenge is to come up with the right kind of functions so that the
resulting refinements are still metrics.

Let $G$ be a finite simple connected graph. Let $w$ be a non-negative real
function on the edges of $G$. The {\bf product weight} of a path $\gamma$
in $G$, denoted by $p(\gamma)$, is defined to be the product of the weights
of its edges. That is, $p(\gamma) = \prod_{e \in \gamma} w(e)$. For vertices
$u$ and $v$ of $G$, let
\[
  \pi(u,v) = 
  \begin{cases}
    \min\left\{\displaystyle \frac{p(\gamma)}{p(\gamma)+1} \colon \gamma\
    \text{is a shortest path between $u$ and $v$.}\right\} & \text{if}\
    u \neq v; \\ 0 & \text{if}\ u = v.
  \end{cases}
\] 
Clearly $\pi(u,v)$ is symmetric and takes values in $[0,1)$ but $\pi$ is not
a metric since $\pi(u,v)$ could be 0 for distinct $u$ and $v$. Moreover,
$\pi$ does not satisfy the triangle inequality in general. For instance,
consider the following triangle with the indicated edge weights:
\tikzset{
    auto,node distance =1 cm and 1 cm,semithick,
    state/.style ={circle, draw, minimum width = 0.7 cm},
    point/.style = {circle, draw, inner sep=0.04cm,fill,node contents={}},
}
\begin{center}
\begin{tikzpicture}
  \node (u) at (-1,0) [label=left:$u$, point];
  \node (v) at (0,1.732) [label=north:$v$, point];
  \node (w) at (1,0) [label=right:$w$, point];

  \path (u) edge node {$\frac{1}{4}$} (v); 
  \path (w) edge node {$\frac{3}{4}$} (u); 
  \path (v) edge node {$\frac{1}{4}$} (w);
\end{tikzpicture}
\end{center}
The triangle inequality is violated because $\pi(u,w) = 3/7 > 2/5 = \pi(u,v)
+ \pi(v,w)$. In the following, let $d_s$ denote the shortest-path distance
and let $d^{\pi}_w$ be $d_s + \pi$.

\begin{theorem}
  $d^{\pi}_w$ is a metric on $G$.  \label{th:metric+proximity}
\end{theorem}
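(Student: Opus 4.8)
The plan is to verify directly that $d^{\pi}_w = d_s + \pi$ satisfies the defining properties of a metric, leaning on what we already know about the shortest-path distance $d_s$ (in particular, that it is a metric and is integer-valued) and about $\pi$ (symmetric, $[0,1)$-valued, $\pi(u,u)=0$). Symmetry and nonnegativity are immediate since $d_s$ and $\pi$ are both symmetric and nonnegative. For the vanishing condition, $d^{\pi}_w(u,u) = d_s(u,u) + \pi(u,u) = 0$, while if $u \neq v$ then $d_s(u,v) \ge 1$ and $\pi(u,v) \ge 0$, so $d^{\pi}_w(u,v) \ge 1 > 0$.

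The substance of the proof is the triangle inequality $d^{\pi}_w(u,v) \le d^{\pi}_w(u,x) + d^{\pi}_w(x,v)$ for vertices $u,v,x$ of $G$. I would split into two cases according to whether $x$ lies on a shortest $u$--$v$ path. Since $d_s$ obeys the triangle inequality and is integer-valued, either $d_s(u,v) \le d_s(u,x) + d_s(x,v) - 1$ or $d_s(u,v) = d_s(u,x) + d_s(x,v)$. In the first case the integer gap in $d_s$ alone settles it: because $\pi$ takes values in $[0,1)$,
\[
  d^{\pi}_w(u,v) = d_s(u,v) + \pi(u,v) < d_s(u,v) + 1 \le d_s(u,x) + d_s(x,v) \le d^{\pi}_w(u,x) + d^{\pi}_w(x,v).
\]

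In the second case, cancelling the $d_s$-terms reduces the claim to $\pi(u,v) \le \pi(u,x) + \pi(x,v)$ --- the very inequality that $\pi$ violates in general (as the displayed weighted triangle shows), so it is essential here that $x$ lies on a geodesic. If $x \in \{u,v\}$ this is trivial. Otherwise, let $\gamma_1$ and $\gamma_2$ be shortest paths from $u$ to $x$ and from $x$ to $v$ attaining the minima that define $\pi(u,x)$ and $\pi(x,v)$. Their concatenation $\gamma$ is a walk from $u$ to $v$ with $d_s(u,x)+d_s(x,v) = d_s(u,v)$ edges; a walk whose length equals the shortest-path distance must be a simple path (otherwise deleting a closed subwalk would shorten it), so $\gamma$ is a shortest $u$--$v$ path and $p(\gamma) = p(\gamma_1)\,p(\gamma_2)$. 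Hence $\pi(u,v) \le p(\gamma)/(p(\gamma)+1)$, and writing $a = p(\gamma_1) \ge 0$, $b = p(\gamma_2) \ge 0$ it remains to check the elementary inequality $\tfrac{ab}{ab+1} \le \tfrac{a}{a+1} + \tfrac{b}{b+1}$, which follows from
\[
  \frac{a}{a+1} + \frac{b}{b+1} \ge 1 - \Bigl(1-\tfrac{a}{a+1}\Bigr)\Bigl(1-\tfrac{b}{b+1}\Bigr) = 1 - \frac{1}{(a+1)(b+1)} \ge 1 - \frac{1}{ab+1} = \frac{ab}{ab+1}.
\]

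I expect this second case to be the main obstacle: the point is to recognize that the failure of the triangle inequality for $\pi$ can only occur when $x$ lies off every shortest $u$--$v$ path --- where the unit jump forced by the integrality of $d_s$ dominates the contribution of $\pi$ --- whereas on geodesics the product weights multiply and the inequality above for $z \mapsto z/(z+1)$ restores the needed subadditivity of $\pi$. Everything else is routine bookkeeping.
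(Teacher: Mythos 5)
Your proof is correct and follows essentially the same route as the paper: the integrality of $d_s$ together with $\pi$ being $[0,1)$-valued reduces the triangle inequality to the case $d_s(u,v)=d_s(u,x)+d_s(x,v)$, where concatenating optimal shortest paths yields a geodesic whose product weight is $p(\gamma_1)p(\gamma_2)$, and the subadditivity of $z\mapsto z/(z+1)$ under products finishes the argument. The only differences are cosmetic: you argue by direct case split rather than by contradiction, and you spell out the elementary inequality $\tfrac{ab}{ab+1}\le\tfrac{a}{a+1}+\tfrac{b}{b+1}$ which the paper uses without proof.
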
 
\begin{proof}
  Only the triangle inequality deserves a proof. If it fails for $d^{\pi}_w$,
  then there exist vertices $x,y,z$ of $G$ such that $d^{\pi}_w(x,y) >
  d^{\pi}_w(x,z) + d^{\pi}_w(z,y)$. That is,
  \begin{equation}
    d_s(x,y) +\pi(x,y) > d_s(x,z) + \pi(x,z) + d_s(z,y) +\pi(z,y).
    \label{ineq:contradiction}
  \end{equation} 
  Since $d_s$ itself satisfies the triangle inequality and $\pi$ takes values
  in $[0,1)$, we obtain the following inequalities by rearranging the terms
  in~\eqref{ineq:contradiction}: 
  \begin{equation}
    1 \ge \pi(x,y) - \pi(x,z) -\pi(z,y) > d_s(x,z) + d_s(z,y) -d_s(x,y)
    \ge 0. \label{ineq:<1}
  \end{equation} 
  Because $d_s$ takes integer values, it follows from~\eqref{ineq:<1} that
  \begin{align}
    d_s(x,y) = d_s(x,z) + d_s(z,y). \label{eq:=}
  \end{align} 
  Choose a shortest path $\gamma_{xz}$ between $x$ and $z$ such that
  $p(\gamma_{xz})/(p(\gamma_{xz})+1)$ realizes the value $\pi(x,z)$. Choose
  $\gamma_{zy}$ analogously. Equation~\eqref{eq:=} implies the walk $\gamma$
  obtained by concatenating $\gamma_{xz}$ and $\gamma_{zy}$ has length
  $d_s(x,y)$ and because every walk contains a path with the same ends,
  $\gamma$ must be a shortest path between $x$ and $y$. From this and the
  fact that $p$ is a nonnegative function, we conclude that
  \begin{align}
    \begin{split} \pi(x,y) &\le \frac{p(\gamma)}{p(\gamma)+1} =
    \frac{p(\gamma_{xz})p(\gamma_{zy})}{p(\gamma_{xz})p(\gamma_{zy})+1}\\
    &\le \frac{p(\gamma_{xz})}{p(\gamma_{xz})+1}
    + \frac{p(\gamma_{xz})}{p(\gamma_{xz})+1} = \pi(x,z) + \pi(z,y).
    \end{split} 
    \label{ineq:w}
  \end{align} 
  However, the inequalities in~\eqref{ineq:w} are in contradiction with those
  in~\eqref{ineq:<1}. Thus, $d_w^\pi$ must satisfy the triangle inequality
  as well.
\end{proof} 
Since  $\Delta^+(\pi) \subseteq (0,1) < \Delta^+(d_s) \subseteq \Nn$,
$d^{\pi}_w$ refines $d_s$ by Theorem~\ref{th:oag-refines}. This fact together
with Theorem~\ref{th:metric+proximity} imply: 
\begin{proposition}
  $d^{\pi}_w$ is a metric refining $d_s$.  \label{p:refine2}
\end{proposition}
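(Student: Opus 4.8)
The plan is to do nothing more than collect two facts that are already in hand. By Theorem~\ref{th:metric+proximity}, $d^{\pi}_w = d_s + \pi$ is a metric on $G$, so the only thing left to justify is that it refines $d_s$ in the sense of Definition~\ref{df:refine}. For metrics on a common set $E$ this amounts, as explained just after that definition, to showing $d^{\pi}_w(p,q) < d^{\pi}_w(p,r)$ whenever $d_s(p,q) < d_s(p,r)$, i.e. to showing that $d^{\pi}_w$ refines $d_s$ when both are regarded as (symmetric) binary functions on $V(G)\times V(G)$.

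For that I would apply Theorem~\ref{th:oag-refines} with the ordered abelian group $T=\Rr$, the Cartesian product $X = V(G)\times V(G)$, and the functions $f = d_s$, $g = \pi$. The hypothesis to verify is $\Delta^+(d_s) > \Delta^+(\pi)$. Since $\pi$ is $[0,1)$-valued, any difference $\pi(\vx') - \pi(\vx)$ lies in $(-1,1)$, so $\Delta^+(\pi) \subseteq (0,1)$; since $d_s$ is integer-valued, any nonzero difference $d_s(\vx') - d_s(\vx)$ is a nonzero integer, so $\Delta^+(d_s) \subseteq \Nn$. Hence every element of $\Delta^+(d_s)$ exceeds every element of $\Delta^+(\pi)$ (vacuously so if either set is empty, which is exactly the locally constant case already handled by Theorem~\ref{th:oag-refines}). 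That theorem then yields that $d_s + \pi = d^{\pi}_w$ refines $d_s$, and combining this with Theorem~\ref{th:metric+proximity} gives precisely the assertion.

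I do not expect a genuine obstacle here: the real difficulty---that perturbing $d_s$ by the non-metric function $\pi$ preserves the triangle inequality---has been quarantined in Theorem~\ref{th:metric+proximity}, while the refinement half is a one-line invocation of Theorem~\ref{th:oag-refines}. The only point that demands a bit of care is the bookkeeping on difference sets, namely pinning down that $\Delta^+(\pi) \subseteq (0,1)$ and $\Delta^+(d_s) \subseteq \Nn$ so that the comparison $\Delta^+(d_s) > \Delta^+(\pi)$ is legitimate; once that is in place the proposition is immediate.
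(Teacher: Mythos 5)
Your proposal is correct and coincides with the paper's own argument: the paper likewise deduces the refinement from Theorem~\ref{th:oag-refines} via the containments $\Delta^+(\pi) \subseteq (0,1) < \Delta^+(d_s) \subseteq \Nn$ and combines this with Theorem~\ref{th:metric+proximity} for metricity. No gaps.
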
 
Theorem~\ref{th:metric+proximity} can be generalized in a number of
ways. First, the finiteness assumption on $G$, which is harmless to our
applications because the collaboration graph itself is finite, can be
dropped.  Its only use is to guarantee the minimum in the definition of
$\pi$ exists. For that matter we can assume between any two vertices of $G$
there are only finitely many paths, or even just finitely many shortest
paths. In fact, by taking infimum instead of minimum in the definition of
$\pi$ we can drop these assumptions altogether. However, with that change
we can no longer guarantee a value of $\pi$ is realized by a path, yet for
any distinct vertices $u, v$ and $\varepsilon > 0$, there will be a shortest
path $\gamma_{uv}$ between $u$ and $v$ with $p(\gamma_{uv})/(p(\gamma_{uv})
+ 1) < \pi(u,v)+\varepsilon$. Since $\pi$ still takes values in $[0,1)$,
the inequalities in~\eqref{ineq:<1} continue to hold. Therefore, the walk
$\gamma = \gamma_{xz}\gamma_{zy}$ must again be a shortest path between $x$
and $y$, and so
\begin{align*}
  \pi(x,y) &\le \frac{p(\gamma)}{p(\gamma)
  +1} \le \frac{p(\gamma_{xz})}{p(\gamma_{xz})+1}
  + \frac{p(\gamma_{zy})}{p(\gamma_{zy})+1} < \pi(x,z) + \pi(z,y) +
  2\varepsilon.  \label{ineq:e}
\end{align*} 
This establishes $\pi(x,y) \le \pi(x,z)+\pi(z,y)$ and that is what needed
to complete the proof.

Second, the assumption on $G$ being connected can also be removed. We extend
$d_s$ and $\pi$ by declaring $d_s(u,v) = \infty$ and $\pi(u,v) = 0$ whenever
$u,v$ are in different components of $G$. With that change $d_s$ is no longer
real-valued and hence not a metric in the strict sense. However, if one
adopts the usual conventions: $\infty + \infty = \infty$, $r < \infty$ and
$\infty + r = \infty$ $(r \in \Rr)$, then $d_s$ still satisfies the triangle
inequality. Moreover, $d^{\pi}_w(u,v) = d_s(u,v) +\pi(u,v)=\infty$ if and
only if $u$ and $v$ are in different components of $G$.  It follows that
$d^{\pi}_w$ can only fail the triangle inequality because it fails on some
component of $G$, i.e. there exist vertices $x,y,z$ in the same component of
$G$ with $d^{\pi}_w(x,y) > d^{\pi}_w(x,z) + d^{\pi}_w(z,y)$. Hence, the proof
of Theorem~\ref{th:metric+proximity} goes through without any modification.

\section{Some Refined Erd\H{o}s numbers} \label{sec:computation}

In this section we propose two different refinements of the shortest-path
distance of the collaboration graph\footnote{It is the graph given by
the MathSciNet database as of the time of submission of this article.}
$C$. To demonstrate how these refinements differentiate people who have
the same Erd\H{o}s number, we compute the new Erd\H{o}s numbers of a few
mathematicians. For an edge $e$ in $C$, let its weight $w(e)$ be the reciprocal
of $j(e)$ the number of joint articles between the two ends of $e$. As in
Section~\ref{sec:metric+proximity}, $d_s$ denotes the shortest-path distance
and $d^{\pi}_w$ denotes the refinement $d_s+\pi$. The {\bf sum weight} of
a path $\gamma$, denoted by $s(\gamma)$, is defined to be the sum of the
weights of its edges. That is, $s(\gamma) = \sum_{e \in \gamma} w(e)$. The
{\bf lightest-path distance} on $C$ is then given by the function
\begin{equation}
  \sigma(u,v) = 
  \begin{cases} \min\left\{s(\gamma) \colon \text{$\gamma$ is a path
    between $u$ and $v$.}\right\} &\text{if $u \neq v$}; \\ 0 &\text{if $u
    = v$}.
  \end{cases}
\end{equation}
We write $d^{\sigma}_w$ for the refinement $d_s + \sigma_{\flat}$ of $d_s$
and use $\pEN(x)$ and $\sEN(x)$ to denote the Erd\H{o}s number of $x$ defined
by $d^{\pi}_w$ and $d^{\sigma}_w$, respectively\footnote{EN
  stands for Erd\H{o}s Number. The 'p' and the 's' in the notation indicate
  the refinements are coming from a product and a sum of edge weights,
  respectively.}.

We rely on the data provided by MathSciNet and the Erd\H{o}s Number Project for
computations. By definition, Erd\H{o}s himself has pEN and sEN zero. Andr\'as
S\'ark\"ozy is the most frequent collaborator of Erd\H{o}s and vice versa. They
have 62 joint papers. It follows that S\'ark\"ozy has the smallest positive
refined Erd\H{o}s numbers of all. The pEN and the sEN of S\'ark\"ozy are both
\[
  1 + \frac{\frac{1}{62}}{1+\frac{1}{62}}= \frac{64}{63} \approx 1.016.
\]
The most frequent collaborator of Andr\'as Hajnal is Erd\H{o}s. They
have 57 joint publications. Consequently, both pEN and sEN of Hajnal are
$58/57 \approx 1.017$. Hajnal is the second most frequent collaborator of
Erd\H{o}s. He also has the second smallest positive pEN and sEN.

Christian Mauduit has Erd\H{o}s number 1 and has two joint articles with
Erd\H{o}s. So his pEN is $4/3$. Mauduit's most frequent collaborator
is S\'ark\"ozy, they have 41 joint papers. His second most frequent
collaborator is Jo\"el Rivat. They co-authored 16 articles. The path
Mauduit--S\'ark\"ozy--Erd\H{o}s has sum weight $1/41 + 1/62 = 103/2542$
which is less than the weight of the edge (1/16) between Mauduit and
Rivat. Therefore, it must be the lightest path between Mauduit and
Erd\H{o}s. Thus,
\[
  \sEN(\text{Mauduit}) = 1+ \frac{\frac{103}{2542}}{1+\frac{103}{2542}} =
  \frac{2748}{2645} \approx 1.039.
\]

Istv\'{a}n Juh\'{a}sz has Erd\H{o}s number 2. The six shortest paths between
Juh\'{a}sz and Erd\H{o}s go through Andr\'{a}s Hajnal, Peter Hamburger,
Kenneth Kunen, Menachem Magidor, Mary Ellen Estill Rudin and Saharon
Shelah, respectively. We organize the information given by these paths into
Table~\ref{tab:JE}.
\begin{table}[ht]
  \centering
  \begin{tabular}{|c|c|c|c|c|c|c|}
    \hline
    $j(\text{Juh\'asz}, x)$ & 32 & 1 & 5 & 1 & 1 & 13\\
    \hline
    $x$ & Hajnal & Hamburger & Kunen & Magidor & Rudin & Shelah \\
    \hline
    $j(x,\text{Erd\H{o}s})$ &57 &1 &1 &1 &1 &3 \\
    \hline
  \end{tabular}
  \caption{The shortest paths between Juh\'asz and Erd\H{o}s} 
  \label{tab:JE}
\end{table}
From the table it is clear that
\[
  \pEN(\text{Juh\'asz}) = 2 +
  \frac{\frac{1}{57}\frac{1}{32}}{1+\frac{1}{57}\frac{1}{32}} =
  \frac{3651}{1825} \approx 2.001.
\]
Given the form of data available to us it is harder to compute the sEN of
Juh\'{a}sz as it is more difficult to determine which paths between Juh\'{a}sz
and Erd\H{o}s are the lightest. First, the path Juh\"asz--Hajnal--Erd\H{o}s
gives $\frac{1}{57} + \frac{1}{32} = \frac{89}{1824} \approx 0.049$ as an
upper bound of the lightest-path distance between Juh\'asz and Erd\H{o}s. Since
$1824/89 \approx 20.5$, we only need to examine the collaborators of Juh\'asz,
besides Hajnal, who have at least 21 joint articles with Juh\'asz. Only
two mathematicians, namely Lajos Soukup (with 29 joint papers) and Zolt\'an
Szentmikl\'ossy (with 44 joint papers) meet this requirement. Solving the
inequality
\[
  \frac{89}{1824} > \frac{1}{44} + \frac{1}{n}
\]
yields $n > 38$. So in order for a lightest path between Juh\'asz and
Erd\H{o}s to go through either Szentmikl\'ossy or Soukup, each of them needs
a collaborator other than Juh\'asz with at least 39 joint articles. It turns
out that the most frequent collaborator of both Soukup and Szentmikl\'ossy
is Juh\'asz and their second most frequent collaborators are each other. They
have 24 joint articles. From this we conclude that Juh\'asz--Hajnal--Erd\H{o}s
is the unique lightest path between Juh\'asz and Erd\H{o}s. Consequently,
\[
  \sEN(\text{Juh\'asz}) = 2+ \frac{\frac{89}{1824}}{1+\frac{89}{1824}}
  = \frac{3915}{1913} \approx 2.047.
\]
We summarize this information\footnote{Here an edge is labeled not
by its weight but by the number of joint articles of its ends.} in
Figure~\ref{fig:isg}.
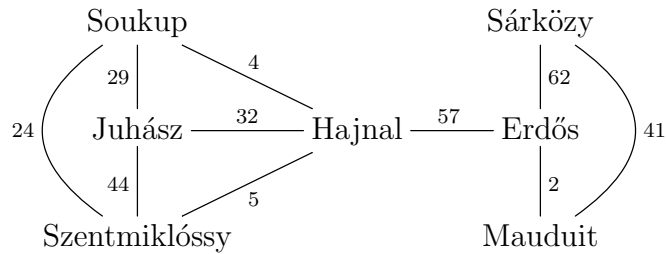
\begin{figure}[ht]
\[
  \xymatrix{ 
    \text{Soukup} \ar@{-}[d]_{29} \ar@{-}@/_3pc/[dd]_{24} \ar@{-}[dr]^{4} & &
    \text{S\'ark\"ozy} \ar@{-}[d]^{62} \ar@{-}@/^3pc/[dd]^{41} \\
    \text{Juh\'asz} \ar@{-}^{32}[r] & \text{Hajnal} \ar@{-}^{57}[r] & \text{Erd\H{o}s}\\
    \text{Szentmikl\'ossy} \ar@{-}^{44}[u] \ar@{-}[ur]_{5} & & \text{Mauduit}
    \ar@{-}[u]_{2}
  }
\]
\caption{A subgraph of the collaboration graph $C$.} \label{fig:isg}
\end{figure}
Moreover, Soukup and Szentmikl\'ossy both have 10 joint articles with their
third most frequent collaborators (J\'anos Gerlits for Szentmikl\'ossy
and Saharon Shelah for Soukup). With this additional information, we can
compute their sEN's and pEN's. We skip the details here but summarize in
Table~\ref{tab:EN1} the new Erd\H{o}s numbers of the mathematicians appearing
in Figure~\ref{fig:isg}.

We conclude this article by suggesting another edge weight function for the
collaboration graph that seems appropriate for the purpose of measuring
closeness between authors: take the weight of an edge to be the ratio
$j(e)/t(e)$ where $j(e)$ is the number of joint articles and $t(e)$ is the
total number of articles published by the ends of $e$. A more sophisticated
version of this weighting function that takes the types of publication into
account has been considered recently in~\cite{ren3}. We leave the computations
of the corresponding refined Erd\H{o}s numbers to the interested reader.
\begin{table}
  \centering
  \begin{tabular}{|c|c|c|c|}
    \hline 
    $x$ & EN & $\pEN(x)$ & $\sEN(x)$ \\ 
    \hline
    Erd\H{o}s & 0 & 0 & 0 \\
    \hline
    S\'ark\"ozy & 1 & $64/63 \approx 1.016$ & $64/63 \approx 1.016$ \\ 
    \hline 
    Hajnal & 1 & $59/58 \approx 1.017$ & $59/58 \approx 1.017$ \\ 
    \hline 
    Mauduit & 1 & $4/3 \approx 1.333$ & $2748/2645 \approx 1.039$\\ 
    \hline 
    Juh\'asz & 2 & $3651/1825 \approx 2.001$ & $3915/1913 \approx 2.047$ \\ 
    \hline
    Szentimikl\'ossy & 2& $573/286 \approx 2.003$ & $44433/21499 \approx
    2.067$ \\
    \hline
    Soukup & 2 & $459/229 \approx 2.004$ & $119007/57301 \approx 2.077$ \\
    \hline
  \end{tabular}
  \caption{Refined Erd\H{o}s Numbers of a few mathematicians.} \label{tab:EN1}
\end{table}

\section*{Acknowledgments} 
We thank Serban Raianu for carefully reading an earlier version of this
article and for bringing~\cite{gen} to our attention. We also thank him and
the referees for valuable advice which helped improve the presentation.

{\footnotesize
\bibliographystyle{plain} 
\bibliography{gd} 
}

{\footnotesize  
\medskip
\medskip
\vspace*{1mm} 
 
\noindent {\it Kayla Lock}\\  
Arizona State University\\
1151 S Forest Ave, Tempe, AZ 85281\\
E-mail: {\tt kalock@asu.edu}\\ \\  

\noindent {\it Wai Yan Pong}\\  
California State University Dominguez Hills \\
1000 E Victoria Street, Carson, CA 90747\\
E-mail: {\tt wpong@csudh.edu}\\ \\

\noindent {\it Alexander Wittmond}\\
University of Missouri \\
Columbia, MO 65211 \\
E-mail: {\tt ajwcz7@mail.missouri.edu} }\\ \\   
\end{document}